\newcommand{\binom}[2]{{#1 \choose #2}}
\newcommand{\lyxaddress}[1]{
\par {\raggedright #1
\vspace{1.4em}
\noindent\par}
}
  \theoremstyle{plain}
  \newtheorem{thm}{\protect\theoremname}
   \newenvironment{proof}[1][\proofname]{\par
     \normalfont\topsep6\p@\@plus6\p@\relax
     \trivlist
     \itemindent\parindent
     \item[\hskip\labelsep
           \scshape
       #1]\ignorespaces
   }{%
     \endtrivlist\@endpefalse
   }
   \providecommand{\proofname}{Proof}
  \theoremstyle{definition}
  \newtheorem{defn}{\protect\definitionname}
  \theoremstyle{plain}
  \newtheorem{cor}{\protect\corollaryname}
\providecommand{\corollaryname}{Corollary}
\providecommand{\definitionname}{Definition}
\providecommand{\theoremname}{Theorem}
\begin{document}

\title{Matrix integrals and generating functions for permutations and one-face
rooted hypermaps}

\author{Jacob P Dyer}

\maketitle

\lyxaddress{jpd514@york.ac.uk}

\lyxaddress{Department of Mathematics, University of York, York YO10 5DD, UK}
\begin{abstract}
Closed-form generating functions for counting one-face rooted hypermaps
with a known number of darts by number of vertices and edges is found,
using matrix integral expressions relating to the reduced density
operator of a bipartite quantum system. A recursion relation for these
generating functions is also found. The method for computing similar
generating functions for two-face rooted hypermaps by number of vertices
and edges is outlined.\end{abstract}
\begin{description}
\item [{Keywords:}] enumeration, rooted hypermap, bipartite quantum system,
matrix integral, generating function
\end{description}

\section{Introduction}

The link between enumerative combinatorics and matrix integration
has been discussed for some time \cite{Zvonkin1997,Lando2004}. In
the physics community it has seen extensive use in the approximation
of quantities in quantum field theory, with the problem of evaluating
matrix integrals being replaced by that of evaluating summations over
graphs and other combinatoric objects \cite{Brezin1978,Itzykson1980}.
A familiar related concept is that of Feynman diagrams, where a path
integral is expressed as a sum of terms expressed using graphs with
particular properties, allowing the path integral to be approximated
as a sum over a finite number of graphs.

It can be used in the opposite sense as well; just as counting graphs
may be used to evaluate intractable integrals, in some cases it is
possible to evaluate the matrix integrals exactly in order to find
generating functions which can be used to count graphs. This has gained
relatively little attention, although examples are known, such as
counting one-face maps with given genus and number of edges \cite{Bessis1980,Harer1986,Zvonkin1997}.

However, the fact that matrix integration is potentially able to produce
exact generating functions for combinatoric classes makes this a very
powerful tool; generating functions have many uses in enumerative
combinatorics, as basic operations on generating functions such as
addition and multiplication have combinatorial meanings in terms of
unions and products of sets \cite[pp 3-4]{Stanley1997}.

In this paper we further demonstrate the power of matrix integrals
by deriving a closed-form generating function for counting rooted
hypermaps with one face and $r$ darts (which are defined in Section
\ref{sub:Hypermaps}) by number of edges and vertices. This is done
using matrix integral expressions arising in the study of finite-dimensional
bipartite quantum systems (which are defined in Section \ref{sub:Bipartite-quantum-systems})

Considerable work has been done on the enumeration of hypermaps prior
to this. Explicit expressions for the number of hypermaps with a given
number of darts have been found for genus zero (planar) \cite{Walsh1975}
and genus one (toroidal) \cite{Arques1987} hypermaps, and recursive
formulae have then been used to extend this to higher genus \cite{Mednykh2010}.
Other methods have also been used, including iterating explicitly
through all hypermaps of a given size and computing their properties;
this has in particular been used to count rooted hypermaps for any
number of vertices, edges, faces and up to twelve darts \cite{Walsh2012}.

In comparison to this past work, the method presented here is neither
limited to a single genus, nor reliant on recursion (the generating
function for hypermaps with $r$ darts is a closed-form function of
$r$ and not dependent on the generating functions for lower $r$)
or direct enumeration. It is also, as far as we are aware, the first
time that matrix integrals have been used to derive generating functions
for hypermaps.

Having said this, we will also show that there is a recursive solution
to this problem, which can find the generating function for $r$ darts
in terms of those for $r-1$ and $r-2$ darts using a simple recursion
relation (see Section \ref{sub:Recursion-relation}). The recursive
method can compute generating functions even more efficiently if the
aim is to find all such functions up to a particular order, although
the ability to compute a single generating function without computing
the preceeding ones is lost.

The one-face rooted hypermap case is in fact a demonstration of a
much more general property of certain matrix integrals (specifically
polynomial integrals over the unit sphere), relating them to sums
over the symmetric group of permutations. We will demonstrate the
broader link using the simpler case of enumeration of permutations
by number of cycles as an example, again deriving the result in the
form of a generating function (see Section \ref{sub:Permutations}).
We will also show our method's broader applicability to rooted hypermaps
in particular by discussing how it may be used to enumerate rooted
hypermaps with two faces as well (again by number of darts, edges
and vertices; see Section \ref{sub:Beyond-one-face}).

\section{Quantum systems and hypermaps}

\subsection{Bipartite quantum systems\label{sub:Bipartite-quantum-systems}}

See \cite[pp 180-186]{Sakurai2013} for an in-depth introduction to
density operators.

A \emph{finite-dimensional bipartite quantum system} $S$ is a system
with an associated Hilbert space $\mathscr{H}\equiv\mathbb{C}^{mn}$
for positive integers $m$ and $n$, such that it can be decomposed
into the union of two systems $A$ and $B$ with Hilbert spaces $\mathscr{H}_{A}\equiv\mathbb{C}^{m}$
and $\mathscr{H}_{B}\equiv\mathbb{C}^{n}$ respectively, such that
$\mathscr{H}=\mathscr{H}_{A}\otimes\mathscr{H}_{B}$. A state of this
system is described by a \emph{density operator} $\hat{\rho}$, which
is a positive Hermitian operator with unit trace acting on $\mathscr{H}$.
We will only be considering \emph{pure states} of $S$, which are
defined as those states where $\hat{\rho}$ is a projection operator.

In this paper we are interested in the \emph{reduced density operator}
for the subsystem $A$, denoted $\hat{\rho}^{A}$, which is found
by taking the partial trace of $\hat{\rho}$ over $\mathscr{H}_{B}$.
$\hat{\rho}^{A}$ is not necessarily in a pure state even when $\hat{\rho}$
is pure, meaning that $\text{Tr}[(\hat{\rho}^{A})^{r}]\ne1$. We wish
to average this expression over all possible pure states of $S$,
and will do so by parametrising $\hat{\rho}$ in terms of a unit vector
$|z\rangle$ in $\mathscr{H}$ (i.e. such that $\hat{\rho}=|z\rangle\langle z|$)
and integrating with respect to the invariant measure on the unit
sphere $S^{2mn-1}$ containing $|z\rangle$ (this measure has been
used for this purpose in other papers, such as \cite{Lubkin1978,Page1993}
-- in \cite{Lubkin1978} it is referred to as the ``Haar measure'').

We express the mean $\langle\text{Tr}[(\hat{\rho}^{A})^{r}]\rangle$
in terms of the function $P_{r}(m,n)$, defined in Theorem \ref{thm:matrix-integral};
this function is important as we will prove in Section \ref{sub:Hypermaps}
that it is equivalent to the generating function for enumerating one-face
rooted hypermaps with $r$ darts, as we will prove in Theorem \ref{thm:Hypermap}
in Section \ref{sub:Hypermaps}.

\bigskip{}

\begin{thm}
\label{thm:matrix-integral}If $\hat{\rho}^{A}$ is the reduced density
operator of an $m$-dimensional subsystem of an $mn$-dimensional
bipartite quantum system $S$, then for any integer $r$,
\begin{equation}
\langle\text{Tr}[\hat{\rho}_{A}^{r}]\rangle=\frac{\Gamma(mn)}{\Gamma(mn+r)}P_{r}(m,n).\label{eq:quantum-ident}
\end{equation}
Here $P_{r}(m,n)$ is defined as 
\begin{equation}
P_{r}(m,n)=\left.\frac{\partial}{\partial\alpha_{a_{1}b_{1}}}\frac{\partial}{\partial\alpha_{a_{2}b_{2}}}\ldots\frac{\partial}{\partial\alpha_{a_{r}b_{r}}}(\alpha_{a_{1}b_{2}}\ldots\alpha_{a_{r-1}b_{r}}\alpha_{a_{r}b_{1}})\right|_{\alpha=0},\label{eq:P_r}
\end{equation}
where $\alpha_{ab}$ are the components of an $m\times n$ real matrix
$\alpha$.\end{thm}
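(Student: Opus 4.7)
The plan is to parametrize pure states by unit vectors $|z\rangle \in \mathbb{C}^{mn}$, identifying the coefficients with the entries $z_{ab}$ of an $m \times n$ complex matrix $z$. A direct computation of the partial trace over $\mathscr{H}_B$ gives $\hat{\rho}^A = zz^\dagger$, so expanding the $r$-th power yields
\[
\text{Tr}[(\hat{\rho}^A)^r] = \sum_{a_1,\ldots,a_r,\,b_1,\ldots,b_r}\; \prod_{s=1}^{r} z_{a_s b_s}\,\bar z_{a_{s+1}\, b_s}, \qquad a_{r+1}\equiv a_1,
\]
a homogeneous polynomial of bidegree $(r,r)$ in the coordinates, which I will average term by term over the sphere $S^{2mn-1}$.

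The key input is the standard identity, provable from unitary invariance of the measure together with a normalisation on a single monomial:
\[
\bigl\langle z_{i_1}\cdots z_{i_r}\,\bar z_{j_1}\cdots \bar z_{j_r}\bigr\rangle = \frac{\Gamma(mn)}{\Gamma(mn+r)}\sum_{\sigma \in S_r}\prod_{s=1}^{r} \delta_{i_s,\, j_{\sigma(s)}}.
\]
Applying this with combined indices $i_s=(a_s,b_s)$ and $j_s=(a_{s+1},b_s)$ and performing the $a$- and $b$-sums, each Kronecker delta product collapses along cycles of a permutation: the $b$-constraints force $b$ to be constant on cycles of $\sigma$, contributing $n^{c(\sigma)}$, while the $a$-constraints do the same for $\rho\sigma$, where $\rho\in S_r$ is the long cycle $s\mapsto s+1\pmod r$ coming from the cyclic trace, contributing $m^{c(\rho\sigma)}$. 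This gives
\[
\bigl\langle \text{Tr}[(\hat\rho^A)^r]\bigr\rangle = \frac{\Gamma(mn)}{\Gamma(mn+r)}\sum_{\sigma\in S_r} m^{c(\rho\sigma)}\, n^{c(\sigma)}.
\]

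Finally I would evaluate $P_r(m,n)$ combinatorially: on setting $\alpha=0$ the $r$ derivatives must be matched bijectively with the $r$ factors $\alpha_{a_j b_{\rho(j)}}$, so matchings are indexed by $\tau\in S_r$ with contribution $\prod_k \delta_{a_k, a_{\tau(k)}}\delta_{b_k, b_{\rho\tau(k)}}$. Summing the implicitly contracted indices yields $P_r(m,n)=\sum_{\tau\in S_r} m^{c(\tau)}\, n^{c(\rho\tau)}$. The change of variable $\tau = \sigma^{-1}\rho^{-1}$, using the elementary identities $c(\tau^{-1})=c(\tau)$ and $c(AB)=c(BA)$, sends $c(\tau)\mapsto c(\rho\sigma)$ and $c(\rho\tau)\mapsto c(\sigma)$, identifying this sum with the one from the quantum average and completing the proof. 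The main thing to watch is the cyclic shift $\rho$ threading through both computations; beyond that, the argument reduces to a standard Haar-integral formula and elementary cycle counting.
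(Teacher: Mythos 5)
Your proof is correct, but it takes a genuinely different route from the paper. The paper converts the spherical average into a Gaussian integral by the standard radial trick (inserting $\tfrac{2}{\Gamma(mn+r)}\int_0^\infty\lambda^{2mn+2r-1}e^{-\lambda^2}d\lambda$ and substituting $x=\lambda z$), introduces sources $\alpha,\beta$, reduces everything to derivatives of $e^{\alpha_{ab}\beta_{ab}}$, and performs the $\beta$-derivatives to land directly on the multiderivative expression (\ref{eq:P_r}); no permutation sum appears in its proof of Theorem \ref{thm:matrix-integral}, that expansion being deferred to Theorem \ref{thm:Hypermap}. You instead invoke the symmetric-subspace moment formula $\langle z_{i_1}\cdots z_{i_r}\bar z_{j_1}\cdots\bar z_{j_r}\rangle=\tfrac{\Gamma(mn)}{\Gamma(mn+r)}\sum_\sigma\prod_s\delta_{i_s,j_{\sigma(s)}}$, evaluate both the quantum average and $P_r(m,n)$ as explicit sums $\sum_\sigma m^{c(\rho\sigma)}n^{c(\sigma)}$ and $\sum_\tau m^{c(\tau)}n^{c(\rho\tau)}$, and match them via $\tau=\sigma^{-1}\rho^{-1}$ using $c(\tau^{-1})=c(\tau)$ and invariance of cycle type under conjugation; all of these steps check out, including the constant in the moment formula. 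What your route buys is that the intermediate object $\sum_\sigma m^{c(\rho\sigma)}n^{c(\sigma)}$ is exactly the hypermap generating function, so you have in effect proved the combinatorial core of Theorem \ref{thm:Hypermap} at the same time; the cost is that the one piece of analysis --- the moment formula, which you assert rather than derive --- is doing precisely the work that the paper's Gaussian computation does explicitly, so a fully self-contained write-up would need either the Schur's-lemma/normalisation argument you allude to or the paper's Gaussian trick anyway. The only presentational caution is that your identification of the two permutation sums must be stated as an equality of polynomials in $m,n$ (which it is), since the paper's $P_r(m,n)$ is defined with implicit contraction over all repeated indices; your reading of that convention is the intended one.
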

\begin{proof}
Let $\{|\phi_{1}^{A}\rangle,\ldots,|\phi_{m}^{A}\rangle\}$ and $\{|\phi_{1}^{B}\rangle,\ldots,|\phi_{n}^{B}\rangle\}$
be two orthonormal basis sets which span $\mathscr{H}_{A}$ and $\mathscr{H}_{B}$
respectively. In terms of this basis we parametrise a general vector
$|z\rangle\in\mathscr{H}$ as
\[
|z\rangle=z_{ab}|\phi_{a}^{A}\rangle\otimes|\phi_{b}^{B}\rangle,
\]
where $z$ is an $m\times n$ complex matrix, and the convention of
summing over repeated indices is assumed; in this and all future working,
any index represented by the letter $a$ runs from $1$ to $m$ and
any index represented by the letter $b$ runs from $1$ to $n$. In
terms of the components of $z$, the inner product of two such vectors
is 
\[
\langle x|z\rangle=x_{ab}^{*}z_{ab},
\]
so the condition for $|z\rangle$ to be a unit vector is 
\[
\langle z|z\rangle=z_{ab}^{*}z_{ab}=1.
\]

We now express $\hat{\rho}^{A}$ using the \emph{reduced density matrix}
$\rho^{A}$, which in our chosen basis has components
\[
\rho_{a_{1}a_{2}}^{A}=\langle\phi_{a_{1}}^{A}|\hat{\rho}^{A}|\phi_{a_{2}}^{A}\rangle=z_{a_{1}b}z_{a_{2}b}^{*},
\]
and use these components to construct the following matrix-integral
expansion for $\langle\text{Tr}[(\hat{\rho}^{A})^{r}]\rangle$:
\begin{equation}
\langle\text{Tr}[(\hat{\rho}^{A})^{r}]\rangle=\frac{1}{S_{2mn-1}}\int_{S^{2mn-1}}d\sigma z_{a_{1}b_{1}}z_{a_{2}b_{1}}^{*}z_{a_{2}b_{2}}z_{a_{3}b_{2}}^{*}\ldots z_{a_{r}b_{r}}z_{a_{1}b_{r}}^{*},\label{eq:matrix-integral}
\end{equation}
where $d\sigma$ is the invariant volume element on the hypersphere
$S^{2mn-1}$ and 
\[
S_{2mn-1}=\frac{2\pi^{mn}}{\Gamma(mn)}
\]
is the volume of $S^{2mn-1}$.

(\ref{eq:matrix-integral}) is a polynomial in the components of $z$
and $z^{*}$ containing only terms of order $2r$. It is known that
such an integral performed over the unit sphere can be converted into
a Gaussian integral \cite{Folland2001}. Following this procedure,
we multiply (\ref{eq:matrix-integral}) by the expression
\[
\frac{2}{\Gamma(mn+r)}\int_{0}^{\infty}\lambda^{2mn+2r-1}e^{-\lambda^{2}}d\lambda,
\]
which equals unity by the definition of the gamma function. Then we
define $x=\lambda z$, and note that $x_{ab}^{*}x_{ab}=\lambda^{2}$
and $d^{2mn}x=\lambda^{2mn-1}d\lambda d\sigma$. Using this change
of variables, we get that
\begin{equation}
\langle\text{Tr}[(\hat{\rho}^{A})^{r}]\rangle=\frac{\Gamma(mn)\pi^{-mn}}{\Gamma(mn+r)}\int_{\mathbb{C}^{mn}}d^{2mn}xe^{-x_{ab}^{*}x_{ab}}x_{a_{1}b_{1}}x_{a_{2}b_{1}}^{*}\ldots x_{a_{r}b_{r}}x_{a_{1}b_{r}}^{*}.\label{eq:x-integral}
\end{equation}
Next we define a pair of real $m\times n$ matrices $\alpha$ and
$\beta$. We use these to remove the polynomial part from (\ref{eq:x-integral}):
\begin{eqnarray*}
\langle\text{Tr}[(\hat{\rho}^{A})^{r}]\rangle & = & \frac{\Gamma(mn)\pi^{-mn}}{\Gamma(mn+r)}\frac{\partial}{\partial\alpha_{a_{1}b_{1}}}\frac{\partial}{\partial\beta_{a_{2}b_{1}}}\ldots\frac{\partial}{\partial\alpha_{a_{r}b_{r}}}\frac{\partial}{\partial\beta_{a_{1}b_{r}}}\\
 &  & \quad\left.\int_{\mathbb{C}^{mn}}d^{2mn}xe^{-x_{ab}^{*}x_{ab}+\alpha_{ab}x_{ab}+x_{ab}^{*}\beta_{ab}}\right|_{\alpha,\beta=0}\\
 & = & \frac{\Gamma(mn)}{\Gamma(mn+r)}\left.\frac{\partial}{\partial\alpha_{a_{1}b_{1}}}\frac{\partial}{\partial\beta_{a_{2}b_{1}}}\ldots\frac{\partial}{\partial\alpha_{a_{r}b_{r}}}\frac{\partial}{\partial\beta_{a_{1}b_{r}}}e^{\alpha_{ab}\beta_{ab}}\right|_{\alpha,\beta=0}.
\end{eqnarray*}
As the derivatives all commute with each other, we can choose what
order to perform them in. Choosing to perform the $\beta$-derivatives
first, we get
\begin{eqnarray*}
\langle\text{Tr}[(\hat{\rho}^{A})^{r}]\rangle & = & \frac{\Gamma(mn)}{\Gamma(mn+r)}\left.\frac{\partial}{\partial\alpha_{a_{1}b_{1}}}\ldots\frac{\partial}{\partial\alpha_{a_{r}b_{r}}}(\alpha_{a_{1}b_{2}}\ldots\alpha_{a_{r}b_{1}})\right|_{\alpha=0}\\
 & \equiv & \frac{\Gamma(mn)}{\Gamma(mn+r)}P_{r}(m,n).
\end{eqnarray*}

\end{proof}

\subsection{Permutations\label{sub:Permutations}}

In Theorem \ref{thm:matrix-integral} we have shown that the matrix
integral (\ref{eq:matrix-integral}) (which has the form of a polynomial
function of matrix components integrated over the unit sphere) can
equivalently be expressed as the multiderivative expression (\ref{eq:P_r}).
Evaluation of $P_{r}(m,n)$ based on (\ref{eq:P_r}) is directly linked
to the symmetric group of permutations in that it involves summing
over all possible pairings of $\frac{\partial}{\partial\alpha}$ terms
with $\alpha$ terms; there are $r!$ such pairings, corresponding
to the different permutations on $r$ elements. In this section we
will demonstrate this fact more clearly by using a related matrix
integral to enumerate the set of permutations on $r$ elements by
number of cycles.

Before we begin, it should be noted that the result in this case is
alreadly known; the number of permutations on the set $\{1\ldots r\}$
with $k$ cycles is the unsigned Stirling number of the first kind
$\mathfrak{s}(r,k)$ \cite[p 234]{Comtet1994}, and these have a generating
function \cite[p 213]{Comtet1994}
\begin{equation}
\sum_{k=1}^{r}\mathfrak{s}(r,k)m^{k}=\frac{\Gamma(m+r)}{\Gamma(m)}.\label{eq:Stirling-gf}
\end{equation}
Our aim here is to show how the matrix integrals are used to find
generating functions, however, so we will in this section derive the
generating function (\ref{eq:Stirling-gf}) directly without considering
the properties of the counting functions themselves (denoted $c_{r}(k)$
from now on to make it clear that nothing of their value is being
assumed).

\bigskip{}

\begin{thm}
Let $c_{r}(k)$ be the number of permutations on the set $R=\{1\ldots r\}$
with a decomposition into $k$ disjoint cycles. Then
\[
\sum_{k=1}^{r}c_{r}(k)m^{k}=\frac{\Gamma(m+r)}{\Gamma(m)}
\]
for any $r\ge1$.\end{thm}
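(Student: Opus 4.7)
The plan is to mimic the argument of Theorem \ref{thm:matrix-integral}, but with a single-index complex vector in place of the matrix-indexed one, and then evaluate the resulting sphere integral trivially by using that $\sum_a |z_a|^2 = 1$ on the unit sphere. The key observation is that the permutation structure emerging from the derivative expansion of $P_r(m,n)$ already essentially counts cycles; the only reason $P_r(m,n)$ depends on both $m$ and $n$ is that each permutation contributes a product $m^{c_1} n^{c_2}$ from two independent systems of contracted indices. Collapsing to one index should leave exactly a sum weighted by $m^{c(\sigma)}$.

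First I would define the analog
\[
Q_r(m) = \left.\frac{\partial}{\partial\alpha_{a_1}}\frac{\partial}{\partial\alpha_{a_2}}\cdots\frac{\partial}{\partial\alpha_{a_r}}\bigl(\alpha_{a_1}\alpha_{a_2}\cdots\alpha_{a_r}\bigr)\right|_{\alpha=0},
\]
where $\alpha_a$ are components of a real $m$-vector and the summation convention applies with $a_i \in \{1,\ldots,m\}$. Expanding the derivatives produces one term for each bijection between the $r$ derivatives and the $r$ factors, i.e.\ one term per permutation $\sigma \in S_r$; the term for $\sigma$ contributes $\prod_{i=1}^{r}\delta_{a_i,a_{\sigma(i)}}$. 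Summing over the free indices, each cycle of $\sigma$ forces its indices to agree and can take any of $m$ values, giving $Q_r(m) = \sum_{\sigma \in S_r} m^{c(\sigma)} = \sum_{k=1}^r c_r(k) m^k$.

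Next I would evaluate $Q_r(m)$ by a second route, mirroring the sphere/Gaussian conversion used for Theorem \ref{thm:matrix-integral}. Consider the normalised sphere integral
\[
I_r(m) = \frac{1}{S_{2m-1}}\int_{S^{2m-1}} d\sigma\, z_{a_1} z_{a_1}^* z_{a_2} z_{a_2}^* \cdots z_{a_r} z_{a_r}^*,
\]
with the repeated $a_i$'s summed. Introducing $\lambda$ via the identity $1 = \frac{2}{\Gamma(m+r)}\int_0^\infty \lambda^{2m+2r-1}e^{-\lambda^2}d\lambda$ and substituting $x = \lambda z$ converts this into a Gaussian integral over $\mathbb{C}^m$. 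Removing the polynomial factors using sources $\alpha_a, \beta_a$ and then performing the $\beta$-derivatives first (exactly as in the proof of Theorem \ref{thm:matrix-integral}) yields $I_r(m) = \frac{\Gamma(m)}{\Gamma(m+r)}Q_r(m)$.

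Finally, I would observe that on the unit sphere $\sum_{a}|z_a|^2 = 1$, so after summing the indices inside the integral, the integrand is identically $1^r = 1$; hence $I_r(m) = 1$. Combining gives $Q_r(m) = \Gamma(m+r)/\Gamma(m)$, which together with the combinatorial expansion above completes the proof. The only real obstacle is the bookkeeping in the derivative expansion--checking that each pairing genuinely corresponds to a distinct permutation and that the index contraction on each cycle produces exactly one factor of $m$--but this is routine once the matrix case has been worked out.
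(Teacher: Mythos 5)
Your proposal is correct and follows essentially the same route as the paper: the paper also converts a sphere integral of $(u\cdot u)^r$ (which is trivially $1$ on $S^{2m-1}$, encoded there via the radial integral $\frac{2}{\Gamma(m)}\int_0^\infty x^{2m+2r-1}e^{-x^2}dx = \Gamma(m+r)/\Gamma(m)$) into a Gaussian integral with sources $\alpha,\beta$, obtains the same multiderivative expression $\left.\frac{\partial}{\partial\alpha_{i_1}}\cdots\frac{\partial}{\partial\alpha_{i_r}}(\alpha_{i_1}\cdots\alpha_{i_r})\right|_{\alpha=0}$, and expands it as a sum over $Sym_r$ with each cycle contributing a factor of $m$. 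The only difference is presentational: you start from the multiderivative and work back to the sphere integral, while the paper starts from the gamma-function identity and works forward.
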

\begin{proof}
Define 
\begin{equation}
P_{r}(m)=\frac{2}{\Gamma(m)}\int_{0}^{\infty}x^{2m+2r-1}e^{-x^{2}}dx.\label{eq:P_r(m)}
\end{equation}
Using the definition of the gamma function, we know that 
\begin{equation}
P_{r}(m)=\frac{\Gamma(m+r)}{\Gamma(m)}.\label{eq:P_r(m)-eval}
\end{equation}

Let $u$ be a random vector in $\mathbb{C}^{m}$, where $x=\sqrt{u\cdot u}$.
Noting that $d^{2m}u=x^{2m-1}dxd\sigma$ (where $d\sigma$ is the
invariant volume element on the sphere $S^{2m-1}$), we express (\ref{eq:P_r(m)})
as
\begin{eqnarray*}
P_{r}(m) & = & \frac{2}{\Gamma(m)}\int_{0}^{\infty}(u\cdot u)^{r}e^{-u\cdot u}x^{2m-1}dx\\
 & = & \frac{2}{\Gamma(m)}\frac{\int_{\mathbb{C}^{m}}(u\cdot u)^{r}e^{-u\cdot u}d^{2m}u}{\int_{S^{2m-1}}d\sigma}.
\end{eqnarray*}
$\int_{S^{2m-1}}d\sigma=2\pi^{m}/\Gamma(m)$ is the volume of the
sphere $S^{2m-1}$, so
\[
P_{r}(m)=\pi^{-m}\int_{\mathbb{C}^{m}}(u\cdot u)^{r}e^{-u\cdot u}d^{2m}u.
\]
We then evaluate the integral by defining two real-valued $m$-vectors
$\alpha$ and $\beta$, and writing
\begin{eqnarray*}
P_{r}(m) & = & \pi^{-m}\left.\left(\frac{\partial}{\partial\alpha_{i}}\frac{\partial}{\partial\beta_{i}}\right)^{r}\int_{\mathbb{C}^{m}}e^{-u\cdot u+\alpha\cdot u+u\cdot\beta}d^{2m}u\right|_{\alpha,\beta=0}\\
 & = & \left.\left(\frac{\partial}{\partial\alpha_{i}}\frac{\partial}{\partial\beta_{i}}\right)^{r}e^{\alpha\cdot\beta}\right|_{\alpha,\beta=0},
\end{eqnarray*}
where the convention of summation over the repeated index $i$ (running
from $1$ to $m$) is assumed. The derivatives all commute, so we
can choose to perform all of the $\beta$-derivatives first. After
doing so we get
\begin{equation}
P_{r}(m)=\left.\frac{\partial}{\partial\alpha_{i_{1}}}\ldots\frac{\partial}{\partial\alpha_{i_{r}}}(\alpha_{i_{1}}\ldots\alpha_{i_{r}})\right|_{\alpha=0}.\label{eq:permutation-func}
\end{equation}

To fully expand the derivatives in this expression out, we need to
account for every possible pairing of the $r$ derivative terms with
the $r$ $\alpha_{i}$ terms, leading to a summation over all permutations
$\sigma$ on $R$ (i.e. all permutations in the symmetric group $Sym_{r}$).
Therefore,
\[
P_{r}(m)=\sum_{\sigma\in Sym_{r}}\prod_{s=1}^{r}\frac{\partial}{\partial\alpha_{i_{s}}}\alpha_{i_{\sigma(s)}}=\sum_{\sigma\in Sym_{r}}\prod_{s=1}^{r}\delta[i_{s},i_{\sigma(s)}]
\]
where $\delta[\ldots,\ldots]$ is the Kronecker delta.

In the term for a given $\sigma$ every $i$ index in the product
is implicitly contracted over, and any cycle in $\sigma$ gives rise
to a single self-contained contraction. e.g. a cycle $(134)$ would
result in a term looking like
\[
\delta[i_{1},i_{3}]\delta[i_{3},i_{4}]\delta[i_{4},i_{1}]=\delta[i_{1},i_{1}]=m.
\]
 Each cycle therefore contributes a factor of $m$, such that the
term corresponding to $\sigma$ is equal to $m^{a}$, where $a$ is
the number of cycles in $\sigma$. This means that
\[
P_{r}(m)=\sum_{k=1}^{r}c_{r}(k)m^{k}
\]
where $c_{r}(k)$ is the number of permutations on $R$ with $k$
cycles. (\ref{eq:P_r(m)-eval}) therefore gives
\[
\sum_{k=1}^{r}c_{r}(k)m^{k}=\frac{\Gamma(m+r)}{\Gamma(m)}.
\]

\end{proof}
\bigskip{}

This proof demonstrates that the procedure for evaluating mutiderivative
expressions such as (\ref{eq:permutation-func}) involves a summation
over permutations, and that the resulting function acts as a generating
function counting the permutations by number of cycles. In the next
section we will explain how rooted hypermaps can be expressed using
permutations, and then evaluate (\ref{eq:P_r}) using the above method
to show that it is the generating function for rooted one-face hypermaps
with $r$ darts.

This generating function is also directly related to $P_{r}(m,n)$
through the identity
\[
P_{r}(m)=P_{r}(m,1).
\]
We will state the reason for this at the end of the next section.

\subsection{Hypermaps\label{sub:Hypermaps}}

The following defintions come from \cite{Lando2004}.

A \emph{hypergraph} $G$ is a pair $\{V,E\}$ consisting of a (non-empty)
set $V$ of vertices and a family $E$ of edges, each edge being a
non-empty family of vertices from $V$. This is a generalisation of
the standard notion of a graph; whereas in graphs each edge must contain
exactly two vertices, in hypergraphs they can contain any positive
number of vertices. An edge in a hypermap can contain any given vertex
more than once, and each edge-vertex connection is called a \emph{dart}.

A \emph{map} is an embedding of a graph onto an orientable surface
such that the graph's complement consists of regions (called faces)
which are isomorphic to the open unit disc; such an embedding is called
a \emph{2-cell embedding}. Through this embedding, a map gains a \emph{genus}
which is equal to the genus of the surface it's embedded in. There
is a bijection between hypergraphs and bicoloured bipartite graphs
\cite{Walsh1975}, so the concept of maps generalises naturally to
hypergraphs, such that a \emph{hypermap} can be defined as an 2-cell
embedding of a hypergraph on an orientable surface. The definitions
of faces and genus carry over.

In addition, a \emph{rooted hypermap} is a hypermap where one dart
has been labelled as a ``root'', i.e. an isomorphism between rooted
hypermaps must preserve the identity of the root as well as the hypermap's
connectivity.

There is another definition of a hypermap (also from \cite{Lando2004})
which will be of more use here. It is equivalent to the above, but
defines the embedding in a combinatorial manner without having to
consider actual surfaces. This defines hypermaps in terms of \emph{3-constellations}
- ordered triples $\{\xi,\chi,\eta\}$ of permutations on $r$ elements
where (a) the group generated by $\{\xi,\chi,\eta\}$ acts transitively
on the set $\{1,\ldots,r\}$ and (b) the product $\xi\chi\eta$ equals
the identity.

\bigskip{}

\begin{defn}
A hypermap $H$ with $r$ darts is a 3-constellation $\{\xi,\chi,\eta\}$
of permutations acting on the set $\{1,\ldots,r\}$, which represents
the hypermap's darts. The three permutations correspond to cycling
the darts adjacent to any given face, edge or vertex%
\footnote{In the initial description of hypermaps given previously, which edge
and vertex each dart is `adjacent to' is clear, as darts were defined
as connections between edges and vertices. If a hypermap is embedded
on an orientable surface, you can uniquely define the ``adjacent
face'' for a dart by moving anticlockwise from the dart around its
adjacent vertex. While each dart borders two faces, it is only adjacent
to one in this sense. Every other dart bordering a given face is adjacent
to it.%
} respectively in an aticlockwise direction around their adjacent object.

\bigskip{}

\begin{figure}[h]
\hfill{}\includegraphics[width=6cm]{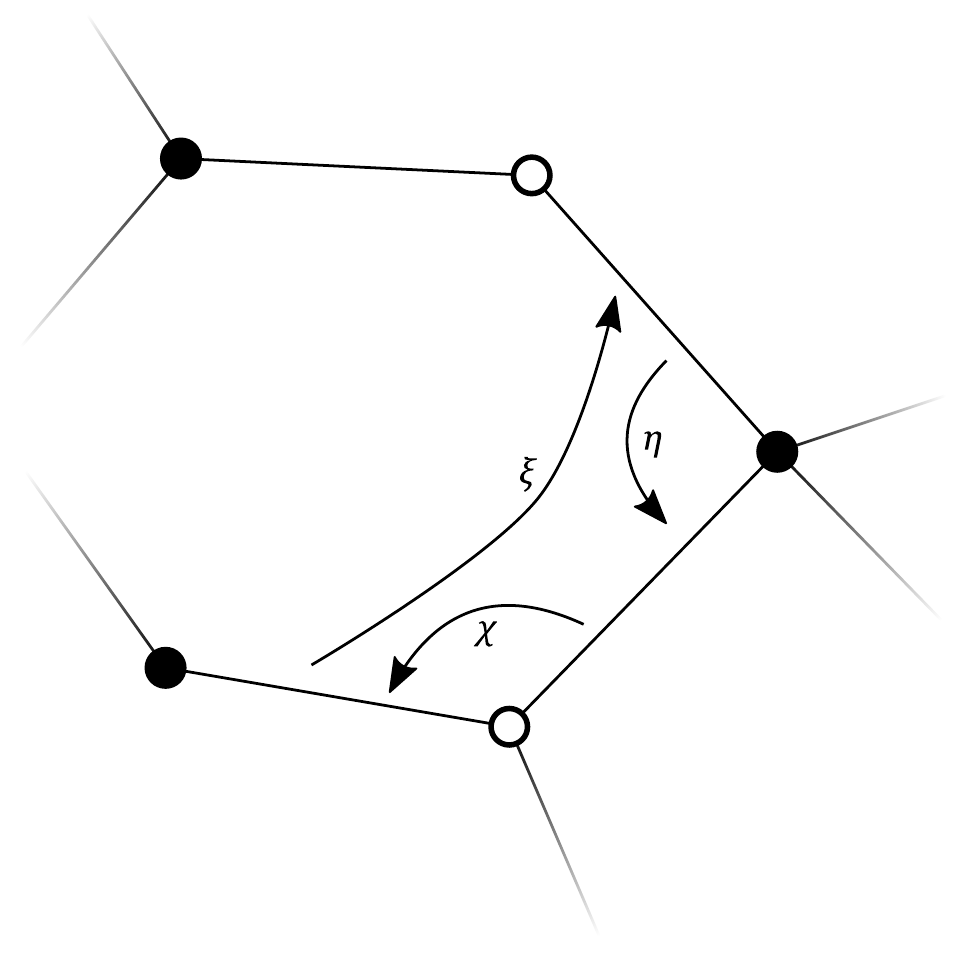}\hfill{}

\protect\caption{\label{fig:perm-action} Dart mappings within a hypermap with 3-constellation
$\{\xi,\chi,\eta\}$. The permutation $\eta$ rotates each dart one
place anti-clockwise around a vertex (black circle). $\chi$ does
the same around an edge (white cicrle). $\xi$ rotates the dart anticlockwise
around a face to the next dart adjacent to that face. The three mappings
arranged like this show why $\xi\chi\eta=1$.}

\end{figure}

Figure \ref{fig:perm-action} shows how the three permutations act
on the darts in a hypermap. It is clear that if e.g. $\eta$ is applied
a number of times equalling the order of a given vertex $V$, then
all darts adjacent to $V$ will map to themselves. Each vertex in
a hypermap is therefore associated with a cycle in the permutation
$\eta$. Similarly, the cycles in $\xi$ and $\chi$ are associated
with faces and edges respectively. We have already shown in Section
\ref{sub:Permutations} that matrix integration provides a way to
enumerate permutations in terms of the number of cycles, so the correspondence
between permutation cycles and edges/vertices in hypermaps now allows
us to enumerate one-face rooted hypermaps by number of edges and vertices.
This will make use of the quantum matrix integrals encountered in
Section \ref{sub:Bipartite-quantum-systems}.

\bigskip{}
\end{defn}
\begin{thm}
\label{thm:Hypermap}If $\hat{\rho}^{A}$ is the reduced density operator
of an $m$-dimensional subsystem of an $mn$-dimensional bipartite
quantum system $S$, then for any integer $r$

\[
\langle\text{Tr}[(\hat{\rho}^{A})^{r}]\rangle=\frac{\Gamma(mn)}{\Gamma(mn+r)}\sum_{e,v}h_{r}^{(1)}(e,v)m^{e}n^{v},
\]
where $h_{r}^{(1)}(e,v)$ is the number of rooted hypermaps with one
face, $r$ darts, $v$ vertices and $e$ edges, and the summation
is over all possible values of these parameters for given $r$.\end{thm}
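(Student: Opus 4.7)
The plan is to combine Theorem \ref{thm:matrix-integral} with the permutation-expansion technique used in the Stirling number proof, and then reinterpret the resulting sum via the 3-constellation definition of hypermaps. By Theorem \ref{thm:matrix-integral}, it suffices to show that $P_{r}(m,n)=\sum_{e,v}h_{r}^{(1)}(e,v)m^{e}n^{v}$. First I would expand the multiderivative defining $P_{r}(m,n)$ exactly as in the permutations section: the only non-vanishing contributions at $\alpha=0$ come from pairing each derivative $\partial/\partial\alpha_{a_{i}b_{i}}$ with exactly one factor $\alpha_{a_{j}b_{j+1}}$ (indices mod $r$) from the product, so we obtain a sum over $\sigma\in Sym_{r}$, producing
\[
P_{r}(m,n)=\sum_{\sigma\in Sym_{r}}\prod_{i=1}^{r}\delta[a_{i},a_{\sigma(i)}]\,\delta[b_{i},b_{\sigma(i)+1}],
\]
with indices contracted, where $\sigma(i)$ identifies which $\alpha$-factor is paired with the $i$th derivative.

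Next I would perform the implicit index sums. The $a$-contractions force $a_{i}=a_{\sigma(i)}$, so the $a$-indices are constant on cycles of $\sigma$, giving a factor $m^{c(\sigma)}$, where $c(\cdot)$ denotes cycle count. Similarly, writing $\rho=(1,2,\ldots,r)$, the $b$-constraints $b_{i}=b_{\sigma(i)+1}$ force $b$-indices to be constant on cycles of $\rho\sigma$, yielding $n^{c(\rho\sigma)}$. Thus
\[
P_{r}(m,n)=\sum_{\sigma\in Sym_{r}}m^{c(\sigma)}n^{c(\rho\sigma)}.
\]

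The final step is the combinatorial identification. I would invoke the 3-constellation definition of a rooted one-face hypermap with $r$ darts: since there is only one face, $\xi$ must be a single $r$-cycle, and for a rooted hypermap we may canonically label the root dart as $1$ and the remaining darts $2,\ldots,r$ by successive application of $\xi$, so that $\xi=\rho$. The relation $\xi\chi\eta=\mathrm{id}$ then fixes $\eta=\chi^{-1}\rho^{-1}$, so $c(\eta)=c(\rho\chi)$. Setting $\chi=\sigma$, the summand $m^{c(\sigma)}n^{c(\rho\sigma)}$ records precisely $m^{e}n^{v}$, where $e$ and $v$ are the edge and vertex counts of the corresponding hypermap. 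Summing over $\sigma$ therefore gives $\sum_{e,v}h_{r}^{(1)}(e,v)m^{e}n^{v}$, completing the proof.

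The main subtlety I expect to address carefully is the bijection between $Sym_{r}$ and rooted one-face hypermaps with $r$ darts. One must check that the transitivity axiom in the definition of a 3-constellation is automatic here (it is, because $\xi=\rho$ alone acts transitively on $\{1,\ldots,r\}$, so the group $\langle\xi,\chi,\eta\rangle$ is transitive for every $\chi$), and that distinct permutations $\sigma$ really do give distinct rooted hypermaps (which follows from the canonical labeling of darts induced by the root and by $\xi=\rho$). Once these points are verified, the identification of $P_{r}(m,n)$ as the claimed generating function follows immediately.
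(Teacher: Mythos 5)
Your proposal is correct and follows essentially the same route as the paper: reduce to $P_{r}(m,n)$ via Theorem \ref{thm:matrix-integral}, expand the multiderivative as a sum over $Sym_{r}$ of contracted Kronecker deltas giving $m^{c(\sigma)}n^{c(\xi\sigma)}$, and identify each term with the rooted one-face hypermap having 3-constellation $\{\xi,\sigma,(\xi\sigma)^{-1}\}$. Your explicit checks of transitivity and of the root-induced canonical labelling are a slightly more careful rendering of the paper's remark that fixing $\xi=(12\ldots r)$ leaves only cyclic relabellings, which the rooting removes.
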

\begin{proof}
Using Theorem \ref{thm:matrix-integral}, we restate this as
\[
P_{r}(m,n)=\sum_{e,v}h_{r}^{(1)}(e,v)m^{e}n^{v}.
\]
We will prove this relation by considering how to evaluate $P_{r}$
as a polynomial in $m$ and $n$ . 

\begin{figure}[h]
\hfill{}\includegraphics[width=10cm]{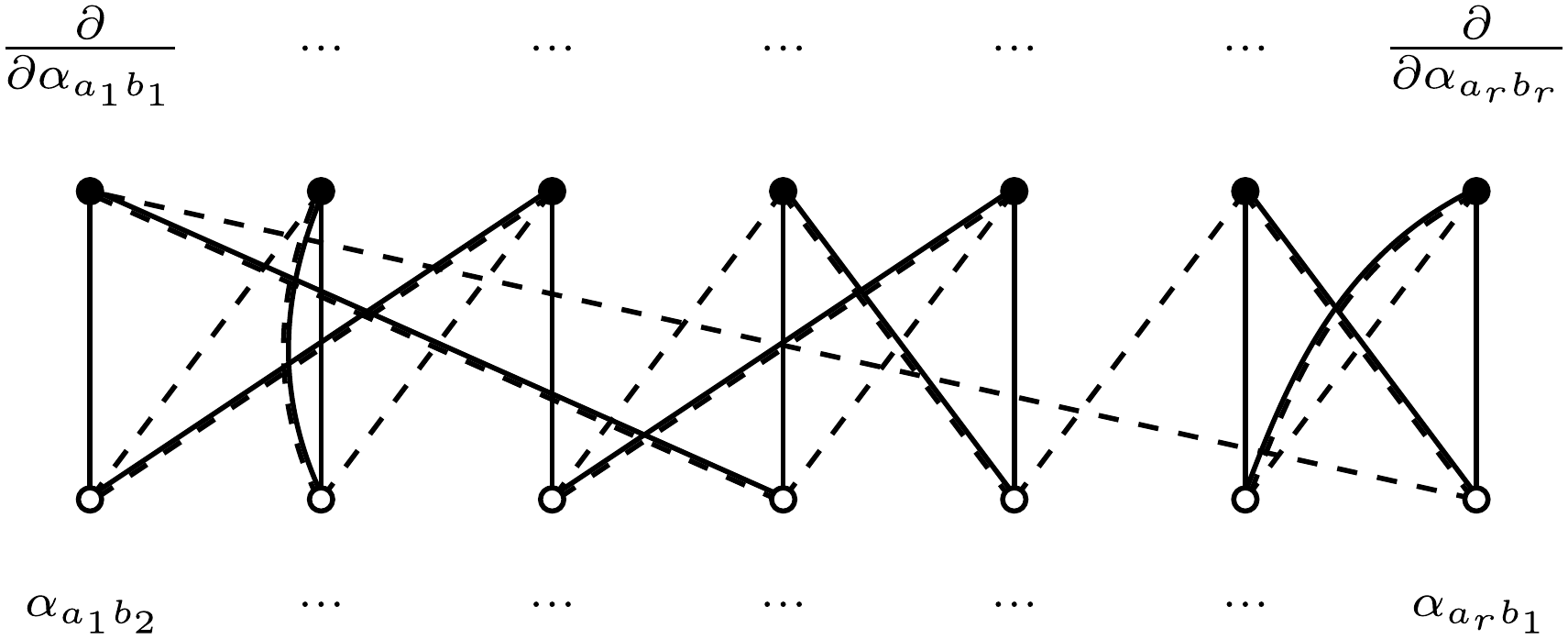}\hfill{}

\protect\caption{\label{fig:contractions}A diagram showing all contractions involved
in evaluating a term in $P_{r}(m,n)$ corresponding to a random permutation
$\sigma$ (in this case $r=7$ and the permutation $\sigma$ this
diagram corresponds to is $(1453)(2)(67)$). The vertices correspond
to the $\frac{\partial}{\partial\alpha}$ and $\alpha$ terms as labelled.
The single solid and dotted lines represent contractions caused by
shared $a$ and $b$ indices respectively. The double solid/dotted
lines are the contractions caused by the pairings of $\frac{\partial}{\partial\alpha}$
and $\alpha$ terms, as given by $\sigma$. The term corresponding
to this diagram is $m^{3}n^{3}$, the exponents of $m$ and $n$ found
by counting closed loops of solid and dotted lines respectively.}
\end{figure}

As with $P_{r}(m)$ in \ref{sub:Permutations}, $P_{r}(m,n)$ as given
in (\ref{eq:P_r}) is a multiderivative function which can be expanded
as a sum over permutations, each permutation corresponding to different
pairing of $\frac{\partial}{\partial\alpha}$ terms and $\alpha$
terms. Defining the cycle permutation 
\begin{equation}
\xi=(123\ldots r),\label{eq:cycle}
\end{equation}
we expand $P_{r}$ as

\begin{equation}
P_{r}(m,n)=\sum_{\sigma\in Sym_{r}}\prod_{i=1}^{r}\frac{\partial}{\partial\alpha_{a_{i}b_{i}}}\alpha_{a_{\sigma(i)}b_{\xi\sigma(i)}}=\sum_{\sigma\in Sym_{r}}\prod_{i=1}^{r}\delta[a_{i},\sigma(a_{i})]\delta[b_{i},\xi\sigma(b_{i})],\label{eq:P_r-enum}
\end{equation}
where $\delta[\ldots,\ldots]$ represents the Kronecker delta. The
term corresponding to a given $\sigma$ therefore consists of a completely
contracted product of Kronecker deltas, some with dimension $m$ (those
with $a$ indices) and some with dimension $n$ (those with $b$ indices).
When these are contracted, they produce terms of the form $m^{A}n^{B}$
where $A$ is the number of cycles in $\sigma$ and $B$ the number
of cycles in $\xi\sigma$ (each cycle creates a completely closed
loop of contractions such as $\delta[a_{1},a_{2}]\delta[a_{2},a_{4}]\delta[a_{4},a_{1}]=m$).
See Figure \ref{fig:contractions} for a graphical representation
of these cycles.

Therefore, the term $m^{A}n^{B}$ associated with $\sigma$ corresponds
to a hypermap $H_{\sigma}$ with 3-constellation $\{\xi,\chi,\eta\}=\{\xi,\sigma,(\xi\sigma)^{-1}\}$.
$A$ is the number of cycles in $\chi$, which is the number of edges
in the hypermap, and $B$ is the number of cycles in $\eta$, or the
number of vertices in the hypermap. As $\xi$ has only one cycle,
these hypermaps necessarily have exactly one face.

This correspondence between permutations $\sigma$ and hypermaps is
not bijective, as two hypermaps with 3-constellations $\{\xi,\chi,\eta\}$
and $\{\xi',\chi',\eta'\}$ related by 
\begin{equation}
\xi=\tau\xi'\tau^{-1},\,\chi=\tau\chi'\tau^{-1},\,\eta=\tau\eta'\tau^{-1}\label{eq:tau-transform}
\end{equation}
for some permutation $\tau$ are isomorphic to each other (the transformation
(\ref{eq:tau-transform}) amounts to a simple reordering of the darts
in the hypermap without changing its connectivity). Given that we
have specified $\xi$ exactly in (\ref{eq:cycle}), however, the only
isomorphisms which preserve $\xi$ are those where $\tau$ is an integer
power of $\xi$. To make the correspondence bijective we simply need
to remove this cyclic equivalence of darts, and we do so by labelling
one dart in the hypermap as the ``root'' to make it distinct from
the others. The summation over permutations in (\ref{eq:P_r-enum})
is therefore equivalent to a summation over all nonisomorphic rooted
hypermaps.

This finally leads us to the conclusion that
\[
P_{r}(m,n)=\sum_{e,v}h_{r}^{(1)}(e,v)m^{e}n^{v},
\]
 and 
\[
\langle\text{Tr}[(\hat{\rho}^{A})^{r}]\rangle=\frac{\Gamma(mn)}{\Gamma(mn+r)}\sum_{e,v}h_{r}^{(1)}(e,v)m^{e}n^{v}.
\]

\end{proof}
\bigskip{}

This proof quickly leads to the additional result:

\bigskip{}

\begin{cor}
\label{cor:all-count}There are $r!$ rooted hypermaps with one face
and $r$ darts.\end{cor}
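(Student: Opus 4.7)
The plan is to evaluate the total count $N_r = \sum_{e,v} h_r^{(1)}(e,v)$ by specialising the generating-function identity of Theorem \ref{thm:Hypermap} at $m = n = 1$. Substituting these values into
\[
P_r(m,n) = \sum_{e,v} h_r^{(1)}(e,v) m^e n^v
\]
collapses the right-hand side to the plain count $N_r$, so the entire statement reduces to showing that $P_r(1,1) = r!$.

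To compute $P_r(1,1)$ I would use the derivative form (\ref{eq:P_r}) from Theorem \ref{thm:matrix-integral}. When $m = n = 1$ the matrix $\alpha$ degenerates to a single real variable $\alpha_{11}$, every factor $\alpha_{a_i b_{i+1}}$ in (\ref{eq:P_r}) is literally this variable, and the expression becomes $\left.(\partial/\partial\alpha_{11})^r \alpha_{11}^r\right|_{\alpha_{11}=0} = r!$, which is exactly what is needed.

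As a sanity check, one can reach the same value by two other routes. From the permutation expansion (\ref{eq:P_r-enum}), with $m = n = 1$ every Kronecker delta equals $1$, so each of the $r!$ terms contributes $1$ and the sum is $r!$. Alternatively, via the quantum identity (\ref{eq:quantum-ident}): for $m = n = 1$ the bipartite system is one-dimensional, the only pure state gives $\hat{\rho}^A = 1$ and $\text{Tr}[(\hat{\rho}^A)^r] = 1$, which on rearranging yields $P_r(1,1) = \Gamma(1+r)/\Gamma(1) = r!$.

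There is no real obstacle in this argument; the only point that deserves care is verifying that the enumeration in Theorem \ref{thm:Hypermap} really counts each rooted one-face hypermap with $r$ darts exactly once, so that setting $m = n = 1$ genuinely gives the total count rather than a weighted sum. This was already settled in the proof of Theorem \ref{thm:Hypermap} by the bijection between $Sym_r$ and rooted one-face hypermaps (with $\xi$ fixed as the $r$-cycle), so the corollary follows immediately.
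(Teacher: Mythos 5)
Your argument is correct. The paper's own proof is a one-liner: it cites the bijection (established in Theorem \ref{thm:Hypermap}, with $\xi$ fixed as the $r$-cycle and the root breaking the residual cyclic symmetry) between rooted one-face hypermaps with $r$ darts and elements of $Sym_r$, and concludes there are $r!$ of them. You instead specialise the generating-function identity at $m=n=1$ and show $P_r(1,1)=r!$; this is a valid and slightly more roundabout packaging of the same fact, and indeed your second ``sanity check'' (every Kronecker delta in (\ref{eq:P_r-enum}) equals $1$, so each of the $r!$ permutations contributes $1$) is precisely the paper's argument in disguise. Your closing remark correctly identifies that the whole weight of the corollary rests on the bijectivity claim settled in Theorem \ref{thm:Hypermap}, so nothing is missing. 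The one genuinely independent ingredient you add is the evaluation via the quantum identity, $P_r(1,1)=\Gamma(1+r)/\Gamma(1)\langle 1\rangle=r!$, which is exactly the technique the paper itself deploys later for the two-face analogue; its advantage is that it requires no combinatorial input at all once Theorem \ref{thm:Hypermap} is in hand, whereas the paper's direct bijection count is the more transparent of the two.
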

\begin{proof}
This follows from the bijection proved in Theorem \ref{thm:Hypermap}
between rooted hypermaps with one face and $r$ darts and permutations
in $Sym_{r}$. There are $r!$ permutations in $Sym_{r}$, so there
are $r!$ such hypermaps.
\end{proof}
\bigskip{}

The bijection also gives a direct link back to the permutation generating
function $P_{r}(m)$ given in Section \ref{sub:Permutations}. As
each one-face rooted hypermap corresponds bijectively to the permutation
$\chi$ in its 3-constellation representation, and the number of cycles
in $\chi$ gives the number of edges in the hypermap, then summing
over all one-face rooted hypermaps with $r$ darts and $k$ edges
gives the number of permutations on $r$ elements with $k$ cycles.
We can equivalently express this as

\begin{equation}
P_{r}(m)=P_{r}(m,1).\label{eq:P_r-identity}
\end{equation}

\section{Computing generating functions}

Theorem \ref{thm:Hypermap} immediately suggests a procedure for calculating
$P_{r}(m,n)$ i.e. by performing the summation over permutations given
in (\ref{eq:P_r-enum}). This can be done in approximately $\mathcal{O}(r\cdot r!)$
time, as there are $r!$ permutations to sum over and it would take
$\mathcal{O}(r)$ time to count the number of cycles for each one.
However, this algorithm is equivalent to just enumerating the individual
hypermaps, and it would be preferable to instead find a way of expressing
the generating functions in a closed form as functions of $m$, $n$
and $r$.

Such a method does exist, and it arises from the fact that
\[
P_{r}(m,n)=\frac{\Gamma(mn+r)}{\Gamma(mn)}\langle\text{Tr}[(\hat{\rho}^{A})^{r}]\rangle.
\]
While Theorem \ref{thm:matrix-integral} is proven by evaluating this
mean as a Gaussian integral, other methods for integrating such expressions
exist. We used one such method in a recent paper, and derived the
closed-form expression \cite{Dyer2014}
\begin{equation}
\langle\text{Tr}[(\hat{\rho}^{A})^{r}]\rangle=\frac{\Gamma(mn)}{r\Gamma(mn+r)}\sum_{k=0}^{m-1}\frac{(-1)^{k}\Gamma(m+r-k)\Gamma(n+r-k)}{k!\Gamma(r-k)\Gamma(m-k)\Gamma(n-k)}\label{eq:P_r-closed}
\end{equation}
for integers $m$ and $n$ and real $r$. We can then rearrange this
expression into a closed-form generating function as follows:

\bigskip{}

\begin{thm}
\label{thm:closed-form}
\begin{equation}
P_{r}(m,n)=\frac{1}{r!}\sum_{k=0}^{r-1}(-1)^{k}\binom{r-1}{k}\frac{\Gamma(m+r-k)}{\Gamma(m-k)}\frac{\Gamma(n+r-k)}{\Gamma(n-k)}\label{eq:gf}
\end{equation}
for any positive integers $r$, $m$ and $n$.\end{thm}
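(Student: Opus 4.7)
The plan is to reduce this theorem to the closed-form expression (\ref{eq:P_r-closed}) from the author's earlier work. By Theorem \ref{thm:matrix-integral} we have
\[
P_r(m,n) = \frac{\Gamma(mn+r)}{\Gamma(mn)}\langle\text{Tr}[(\hat{\rho}^{A})^{r}]\rangle,
\]
so substituting (\ref{eq:P_r-closed}) directly cancels the prefactor $\Gamma(mn+r)/\Gamma(mn)$ and yields
\[
P_r(m,n) = \frac{1}{r}\sum_{k=0}^{m-1}\frac{(-1)^{k}}{k!\,\Gamma(r-k)}\frac{\Gamma(m+r-k)}{\Gamma(m-k)}\frac{\Gamma(n+r-k)}{\Gamma(n-k)}.
\]
The work then reduces to a purely algebraic rearrangement to match the binomial form in (\ref{eq:gf}).

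Next, I would rewrite the binomial coefficient in (\ref{eq:gf}) as $\binom{r-1}{k} = \Gamma(r)/(k!\,\Gamma(r-k))$, so that the right-hand side of (\ref{eq:gf}) becomes
\[
\frac{1}{r!}\cdot\Gamma(r)\sum_{k=0}^{r-1}\frac{(-1)^{k}}{k!\,\Gamma(r-k)}\frac{\Gamma(m+r-k)}{\Gamma(m-k)}\frac{\Gamma(n+r-k)}{\Gamma(n-k)} = \frac{1}{r}\sum_{k=0}^{r-1}\frac{(-1)^{k}}{k!\,\Gamma(r-k)}\frac{\Gamma(m+r-k)}{\Gamma(m-k)}\frac{\Gamma(n+r-k)}{\Gamma(n-k)}.
\]
Comparing with the expression derived above for $P_r(m,n)$, the summand is identical; only the upper limit on $k$ differs ($m-1$ versus $r-1$).

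The remaining step is to reconcile these two summation ranges, and this is the only subtle point. If $r \le m$, then for $k$ in the range $r \le k \le m-1$ the factor $1/\Gamma(r-k)$ vanishes because $\Gamma$ has a pole at every non-positive integer, so those terms contribute nothing and the upper limit can be lowered from $m-1$ to $r-1$. Conversely, if $m < r$, then for $k$ in the range $m \le k \le r-1$ the factor $1/\Gamma(m-k)$ vanishes for the same reason, so extending the upper limit from $m-1$ up to $r-1$ introduces only zero terms. In either case, the two sums are equal, which establishes (\ref{eq:gf}).

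The main obstacle is simply handling the summation limits cleanly; there is no deep analytic difficulty, since once the relation to $\langle\text{Tr}[(\hat{\rho}^{A})^{r}]\rangle$ is invoked and (\ref{eq:P_r-closed}) is substituted, the proof collapses to bookkeeping. I would make explicit the convention that $1/\Gamma(-j) = 0$ for $j\in\mathbb{Z}_{\ge 0}$, so that the range-switching step is unambiguous for both arrangements of $m$ relative to $r$.
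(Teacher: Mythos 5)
Your proposal is correct and follows essentially the same route as the paper: substitute the closed form for $\langle\text{Tr}[(\hat{\rho}^{A})^{r}]\rangle$ to cancel the $\Gamma(mn+r)/\Gamma(mn)$ prefactor, absorb $1/(k!\,\Gamma(r-k))$ into the binomial coefficient $\binom{r-1}{k}$ for integer $r$, and then shift the upper summation limit from $m-1$ to $r-1$ because the extra (or omitted) terms vanish via the poles of $\Gamma$ at non-positive integers. Your explicit case split on $r\le m$ versus $m<r$ is, if anything, slightly more careful than the paper's one-line justification of the same step.
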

\begin{proof}
From (\ref{eq:quantum-ident}) and (\ref{eq:P_r-closed}), we get
that
\[
P_{r}(m,n)=\frac{1}{r}\sum_{k=0}^{m-1}\frac{(-1)^{k}\Gamma(m+r-k)\Gamma(n+r-k)}{k!\Gamma(r-k)\Gamma(m-k)\Gamma(n-k)}
\]
for integers $m$ and $n$ and real $r$. As we are only concerned
with integers $r$ in this paper, however, we can simplify it to
\begin{eqnarray*}
P_{r}(m,n) & = & \frac{1}{r}\sum_{k=0}^{m-1}\frac{(-1)^{k}\Gamma(m+r-k)\Gamma(n+r-k)}{k!(r-k-1)!\Gamma(m-k)\Gamma(n-k)}\\
 & = & \frac{1}{r!}\sum_{k=0}^{m-1}(-1)^{k}\binom{r-1}{k}\frac{\Gamma(m+r-k)}{\Gamma(m-k)}\frac{\Gamma(n+r-k)}{\Gamma(n-k)}.
\end{eqnarray*}
The term inside the summation is zero if $k>m$ or $k>r$, so we are
free to change the upper bound of the summation to any integer greater
than or equal to $[\min(m,r)-1]$. Therefore,
\[
P_{r}(m,n)=\frac{1}{r!}\sum_{k=0}^{r-1}(-1)^{k}\binom{r-1}{k}\frac{\Gamma(m+r-k)}{\Gamma(m-k)}\frac{\Gamma(n+r-k)}{\Gamma(n-k)}.
\]

\end{proof}
\bigskip{}

This expression is, for known $r$, a sum over a fixed finite number
of polynomials in $m$ and $n$, making it suitable for use as a generating
function. It is also clearly symmetric in $m$ and $n$, which reflects
a fundamental property of rooted hypermaps -- that there is a bijection
from the set of rooted hypermaps to itself which consists of replacing
each edge with a vertex and vice versa.

\subsection{Computation}

\begin{figure}[h]
\hfill{}\subfloat[\label{fig:graph-a}]{\includegraphics[width=4cm]{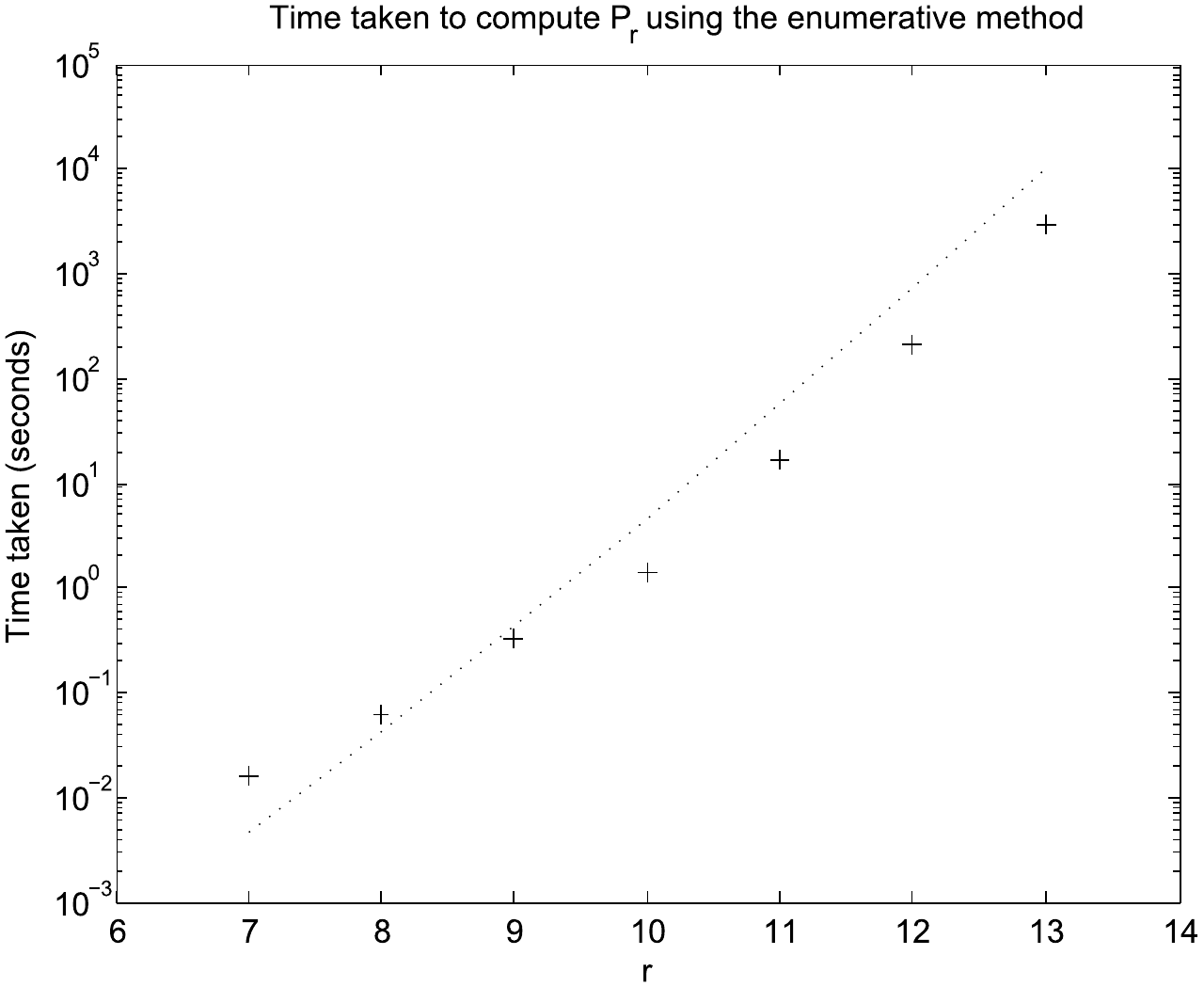}

}\hfill{}\subfloat[\label{fig:graph-b}]{\includegraphics[width=4cm]{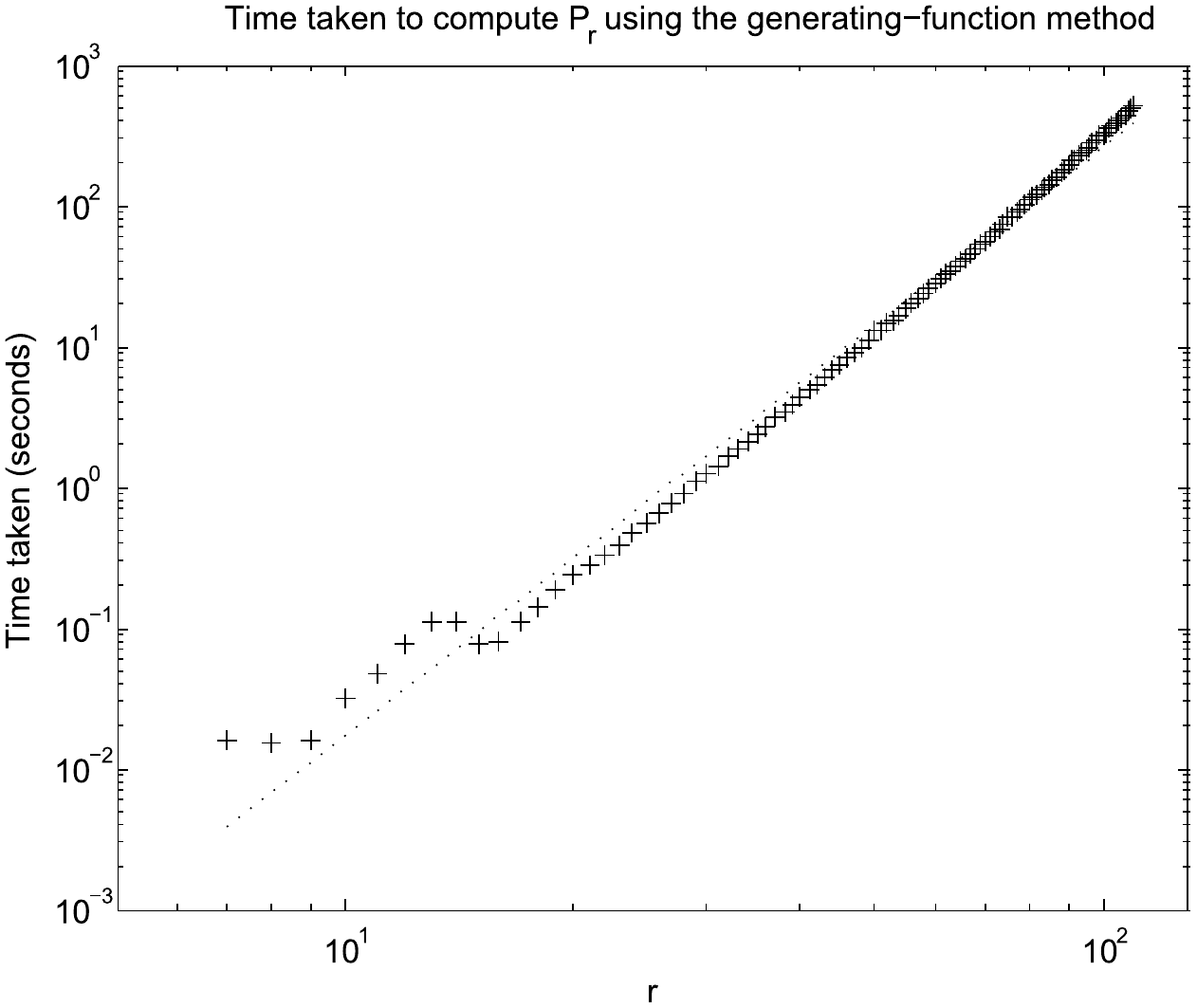}

}\hfill{}

\protect\caption{\label{fig:time-graph}Graphs of time taken to compute $P_{r}(m,n)$
for various values of $r$ using (a) the enumerative method and (b)
the generating-function method. The fitted curves are (a) the predicted
$\mathcal{O}(r\cdot r!)$ time dependence and (b) a power law fit
with exponent 4.167. Both methods were computed on a 2012 Dell XPS
12 with a 2.49GHz processor, and cases for small $r$ where the total
run-time was less than the 16ms resolution of the computer's clock
have been omitted.}
\end{figure}

In order to compare this generating function with existing results,
we calculated the coefficients of $P_{r}$ computationally using two
different methods:
\begin{enumerate}
\item \emph{Enumerative} -- enumerating permutations using Heap's algorithm
\cite{Heap1963a} and counting cycles using the scheme shown in figure
\ref{fig:contractions}.
\item \emph{Generating-function} -- expanding the generating function (\ref{eq:gf})
explicitly.
\end{enumerate}
The enumerative method has a predicted complexity of $\mathcal{O}(r\cdot r!)$,
which is reflected in the measured run-time of our C++ implementation
(see figure \ref{fig:graph-a}), in which all cases up to and including
$P_{13}$ (a total of $6\,749\,977\,113$ hypermaps) were computed
in just under an hour. This can be compared to previous enumerative
work by Walsh, in which he was able to compute $285\,764\,591\,114$
hypermaps (rooted, sensed and unsensed hypermaps with any number of
faces and up to $12$ darts) in a week \cite{Walsh2012}. This indicates
that our algorithm was able to process hypermaps at approximately
four times the rate that Walsh could; whether this improvement is
due to the algorithm or hardware is unclear.

We also implemented the generating-function method in C++, using a
specially designed class object to implement storage and manipulation
of integer polynomials in two variables, along with the open-source
MAPM library \cite{M.C.Ri} to handle the arbitrarily large integers
involved without loss of precision (calculating $P_{r}(m,n)$ requires
handling integers significantly larger than $r!$). This method appears
to run in polynomial time (based on measured run-times -- see figure
\ref{fig:graph-b}), a significant improvement in terms of speed over
the enumerative method. Indeed, the generating-function method reached
$P_{13}$ (as far as we took the enumerative method) in less than
a second, and in the hour it took the enumerative method to get that
far the generating-function method had reached $P_{90}$.

It must also be noted that the results given by these two methods
agreed with each other in all cases covered by both, and also agreed
with the values previously computed by Walsh \cite{Walsh2012}.

\subsection{Recursion relation\label{sub:Recursion-relation}}

We now have an explicit closed-form expression for our generating
functions $P_{r}(m,n)$, allowing us to compute any of them in isolation
for arbitrary $r$. However, we can go further and also derive a recursion
relation, allowing $P_{r}(m,n)$ to be computed quickly in terms of
the previous two generating functions. This arises from the fact that
$P_{r}(m,n)$ has the form of a hypergeometric sum, allowing us to
use Zeilberger's algorithm for deriving recursion relations \cite{Petkovsek1996}.

\bigskip{}

\begin{thm}
For any $r\ge1$,
\begin{equation}
(r+3)P_{r+2}(m,n)=(2r+3)(m+n)P_{r+1}(m,n)+r[(r+1)^{2}-(m-n)^{2}]P_{r}(m,n).\label{eq:P-Recurse}
\end{equation}
\end{thm}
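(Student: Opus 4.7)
The plan is to derive the recursion directly from the closed-form expression for $P_r(m,n)$ in Theorem \ref{thm:closed-form}, by creative telescoping; since the summand there is a proper hypergeometric term in both the summation index $k$ and the parameter $r$, Zeilberger's algorithm (see \cite{Petkovsek1996}) is guaranteed to produce a linear recurrence of the stated shape.

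Concretely, I would set $r!\,P_r(m,n) = \sum_{k \ge 0} T(r,k)$ with
\[
T(r,k) = (-1)^k \binom{r-1}{k} \frac{\Gamma(m+r-k)}{\Gamma(m-k)} \frac{\Gamma(n+r-k)}{\Gamma(n-k)},
\]
interpreting each gamma ratio as the polynomial $(m-k)(m-k+1)\cdots(m-k+r-1)$, so that $T(r,k)$ is well defined for all $k$ and is compactly supported (it vanishes outside $0 \le k \le r-1$). A direct calculation shows that both shift ratios $T(r+1,k)/T(r,k)$ and $T(r,k+1)/T(r,k)$ are rational functions of $r,k,m,n$, which is the hypothesis needed by Zeilberger's algorithm. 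I would then search for polynomials $a_0(r), a_1(r), a_2(r)$ (allowed to depend on $m,n$) and a rational certificate $R(r,k)$ such that, setting $G(r,k) := R(r,k)\,T(r,k)$,
\[
a_2(r)\,T(r+2,k) + a_1(r)\,T(r+1,k) + a_0(r)\,T(r,k) = G(r,k+1) - G(r,k)
\]
holds identically in $k$. Summing over $k \ge 0$ makes the right-hand side telescope to zero (boundary terms vanish by compact support), yielding
\[
a_2(r)\,(r+2)!\,P_{r+2}(m,n) + a_1(r)\,(r+1)!\,P_{r+1}(m,n) + a_0(r)\,r!\,P_r(m,n) = 0,
\]
and matching this against the target (\ref{eq:P-Recurse}) fixes the $a_i$ uniquely up to an overall scalar factor.

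The main obstacle is producing the certificate $R(r,k)$. In practice I would obtain it by running a standard implementation of Zeilberger's algorithm (for instance \texttt{EKHAD}) on $T(r,k)$; once $R$ is in hand, verification of the WZ relation collapses to a single polynomial identity in $k, r, m, n$ that can be checked by clearing denominators and expanding both sides. As a hand-computable alternative, I would posit an ansatz $R(r,k) = P(r,k,m,n)/[(k-r)(k-r-1)]$ with $P$ polynomial of modest degree and solve the resulting linear system for its coefficients. No base cases need to be verified separately, because the identity is established at the level of the summand and so lifts to $P_r$ for every $r \ge 1$ automatically.
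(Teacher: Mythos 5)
Your proposal follows essentially the same route as the paper: the paper also writes $P_r(m,n)=\sum_k F(r,k)$ with the summand from Theorem \ref{thm:closed-form}, runs Zeilberger's algorithm (via the EKHAD package) to produce an explicit certificate $G(r,k)$ satisfying the WZ-style relation, and sums over $k$ so the right-hand side telescopes to zero; your normalisation by $r!$ and your ansatz for the certificate's denominator $(k-r)(k-r-1)$ in fact match the structure of the certificate the paper exhibits. The only difference is that the paper states the certificate explicitly and notes it "can be verified by substitution and cancellation," whereas you leave it to be produced by the software, which is the same logical dependency.
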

\begin{proof}
First, we write
\[
P_{r}(m,n)=\sum_{k}F(r,k)
\]
where
\[
F(r,k)=\frac{(-1)^{k}}{r!}\binom{r-1}{k}\frac{\Gamma(m+r-k)}{\Gamma(m-k)}\frac{\Gamma(n+r-k)}{\Gamma(n-k)}
\]
and the sum is over all integers $k$ ($F$ is zero when $k<0$ or
$k\ge r$, so this is equivalent to (\ref{eq:gf})). The parameters
$m$ and $n$ have been omitted for simplicity. Using the EKHAD Maple
package \cite{DZeilberger}, we find that this satisfies
\begin{equation}
\begin{array}{c}
(r+3)F(r+2,k)-(2r+3)(m+n)F(r+1,k)+r[(m-n)^{2}-(r+1)^{2}]F(r,k)\\
=G(r,k+1)-G(r,k)\qquad\qquad\qquad
\end{array}\label{eq:F-recurse}
\end{equation}
where
\begin{eqnarray*}
G(r,k) & = & \frac{(-1)^{k}}{(r+2)!}\binom{r}{k-1}\frac{(m+r-k)!}{(m-k-1)!}\frac{(n+r-k)!}{(n-k-1)!}(k^{2}r-3kr^{2}-mnr+2r^{3}\\
 &  & +k^{2}+km+kn-7kr-3mn-mr-nr+7r^{2}-4k-m-n+8r+3).
\end{eqnarray*}
This can be verified by substitution and cancellation.

$G(r,k)$ is zero when $k<1$ or $k>r+1$, so when we sum (\ref{eq:F-recurse})
over all $k$, the right hand side telescopes to zero, and each $F(r,k)$
term in the left hand side sums to a $P_{r}(m,n)$ term, so
\[
(r+3)P_{r+2}(m,n)-(2r+3)(m+n)P_{r+1}(m,n)+r[(m-n)^{2}-(r+1)^{2}]P_{r}(m,n)=0,
\]
which is equivalent by rearrangement to (\ref{eq:P-Recurse}).
\end{proof}
\bigskip{}

Along with the initial cases $P_{1}(m,n)=mn$ and $P_{2}(m,n)=mn(m+n)$,
this gives us a simple method of computing any $P_{r}(m,n)$ recursively.

(\ref{eq:P-Recurse}) does not possess an obvious combinatorial interpretation,
although it does make a number of properties of $P_{r}(m,n)$ immediately
clear, i.e. that $P_{r}(m,n)$ is symmetric, and $P_{r}(m,m)$ is
a polynomial of order $r+1$ which is even when $r$ is odd and odd
when $r$ is even.

\subsection{Beyond one face\label{sub:Beyond-one-face}}

The method used in Theorem \ref{thm:closed-form} along with \cite{Dyer2014}
can be used more generally to compute other generating functions which
can be expressed in terms of the reduced density operator $\hat{\rho}^{A}$.
This fact can be used to compute generating functions for counting
rooted hypermaps with more than one face, and in this section we briefly
outline how this can be done using the two-face case as an example.

\bigskip{}

\begin{thm}
Let
\[
P_{a,b}(m,n)=\frac{\Gamma(mn+a+b)}{\Gamma(mn)}\langle\text{Tr}[(\hat{\rho}^{A})^{a}]\text{Tr}[(\hat{\rho}^{A})^{b}]\rangle
\]
for positive integers $r$ and $s$, and let $h_{r}^{(2)}(e,v)$ be
the number of rooted hypermaps with two faces, $e$ edges and $v$
vertices. Then
\[
\sum_{e,v}h_{r}^{(2)}(e,v)m^{e}n^{v}=\sum_{b=1}^{r-1}\frac{1}{b}\left(P_{r-b,b}(m,n)-P_{r-b}(m,n)P_{b}(m,n)\right).
\]
\end{thm}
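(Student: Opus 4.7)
The plan is to mirror the proof of Theorem~\ref{thm:Hypermap} but with a face-permutation $\xi$ consisting of two cycles rather than one. First I would repeat the matrix-integral derivation of Theorem~\ref{thm:matrix-integral} with $\text{Tr}[(\hat\rho^A)^r]$ replaced by $\text{Tr}[(\hat\rho^A)^a]\text{Tr}[(\hat\rho^A)^b]$: each trace supplies its own cyclic chain of $z$-factors, and after combining them into a single Gaussian integral against one pair of source matrices and evaluating the $\beta$-derivatives, the derivative-matching argument that led to (\ref{eq:P_r-enum}) yields
\[
P_{a,b}(m,n) \;=\; \sum_{\sigma\in Sym_r} m^{c(\sigma)}\, n^{c(\xi_{a,b}\sigma)},
\]
where $r=a+b$, $c(\cdot)$ denotes the number of cycles of a permutation, and $\xi_{a,b}:=(1\,2\,\ldots\,a)(a{+}1\,\ldots\,r)$ encodes the two trace-chains as two disjoint cycles.

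Next I would split the sum according to whether $\langle\xi_{a,b},\sigma\rangle$ acts transitively on $\{1,\ldots,r\}$. Since the only $\xi_{a,b}$-invariant subsets are unions of its two cycles, intransitivity is equivalent to $\sigma$ preserving the block partition $\{\{1,\ldots,a\},\{a{+}1,\ldots,r\}\}$; such $\sigma$ factor uniquely as $\sigma_a\sigma_b$ with $\sigma_a\in Sym_a$ and $\sigma_b\in Sym_b$ acting on the respective blocks, and since the cycle counts of both $\sigma$ and $\xi_{a,b}\sigma$ split additively across the two blocks, the intransitive contribution factors as $P_a(m,n)\,P_b(m,n)$. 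Consequently
\[
P_{a,b}(m,n) - P_a(m,n)\,P_b(m,n) \;=\; \sum_{\sigma\text{ transitive}} m^{c(\sigma)}\, n^{c(\xi_{a,b}\sigma)}
\]
is the generating function, by edges and vertices, for labeled \emph{connected} two-face hypermaps with $\xi=\xi_{a,b}$.

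The combinatorial heart of the argument is then to convert such labeled enumerations into rooted ones. Given a rooted connected two-face hypermap whose root lies on a face of size $a$ (the other face having size $b=r-a$), I would label the root as dart~$1$ and let $\xi$ propagate the labels $2,\ldots,a$ anticlockwise around that face; the other face can then be labeled $a+1,\ldots,r$ cyclically after an arbitrary choice of which of its $b$ darts is dart~$a{+}1$. Each rooted hypermap thus lifts to exactly $b$ labeled hypermaps with $\xi=\xi_{a,b}$, and conversely every such labeled hypermap projects to a unique rooted hypermap by declaring dart~$1$ the root. Therefore $\frac{1}{b}\bigl(P_{a,b}(m,n) - P_a(m,n)P_b(m,n)\bigr)$ is the generating function for rooted connected two-face hypermaps whose root-face has size $r-b$, and summing over $b\in\{1,\ldots,r-1\}$ counts every rooted two-face hypermap exactly once, indexed by the size of its root-face, yielding the claimed identity.

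The step requiring the most care is the $b$-to-$1$ rooting argument in the degenerate case $a=b$, where the centralizer of $\xi_{a,b}$ in $Sym_r$ properly contains $\langle\xi_{a,b}\rangle$ (it includes an involution swapping the two equal-length cycles). Because my correspondence is constructed outward from the root rather than obtained by quotienting the transitive $\sigma$'s by that centralizer, the factor $1/b$ remains correct uniformly; the distinctness of the $b$ labelings induced by one rooted hypermap follows from the fact that the centralizer of any transitive subgroup of $Sym_r$ acts semiregularly on $\{1,\ldots,r\}$, so no nontrivial automorphism of a rooted connected hypermap can fix dart~$1$. A smaller piece of bookkeeping in the first step is verifying that, when the two trace-chains are relabeled into a single sequence of $a$- and $b$-indices, the resulting face-permutation is precisely $\xi_{a,b}$; this follows by the same index-tracking used in Theorem~\ref{thm:matrix-integral}.
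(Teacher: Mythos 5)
Your proposal is correct and follows essentially the same route as the paper: interpret $P_{a,b}$ as a sum over $\sigma\in Sym_{a+b}$ with a face permutation $\xi_{a,b}$ consisting of two cycles, subtract $P_{a}(m,n)P_{b}(m,n)$ to remove the disconnected (block-preserving) contributions, divide by $b$ for the cyclic relabelling freedom of the non-root face, and sum over $b$. You in fact supply details the paper only asserts -- notably the transitivity characterisation of connectedness and the semiregularity argument showing that the $b$ labelings of a connected diagram are genuinely distinct, including in the degenerate case $a=b$.
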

\begin{proof}
\begin{figure}[h]
\hfill{}\includegraphics[width=10cm]{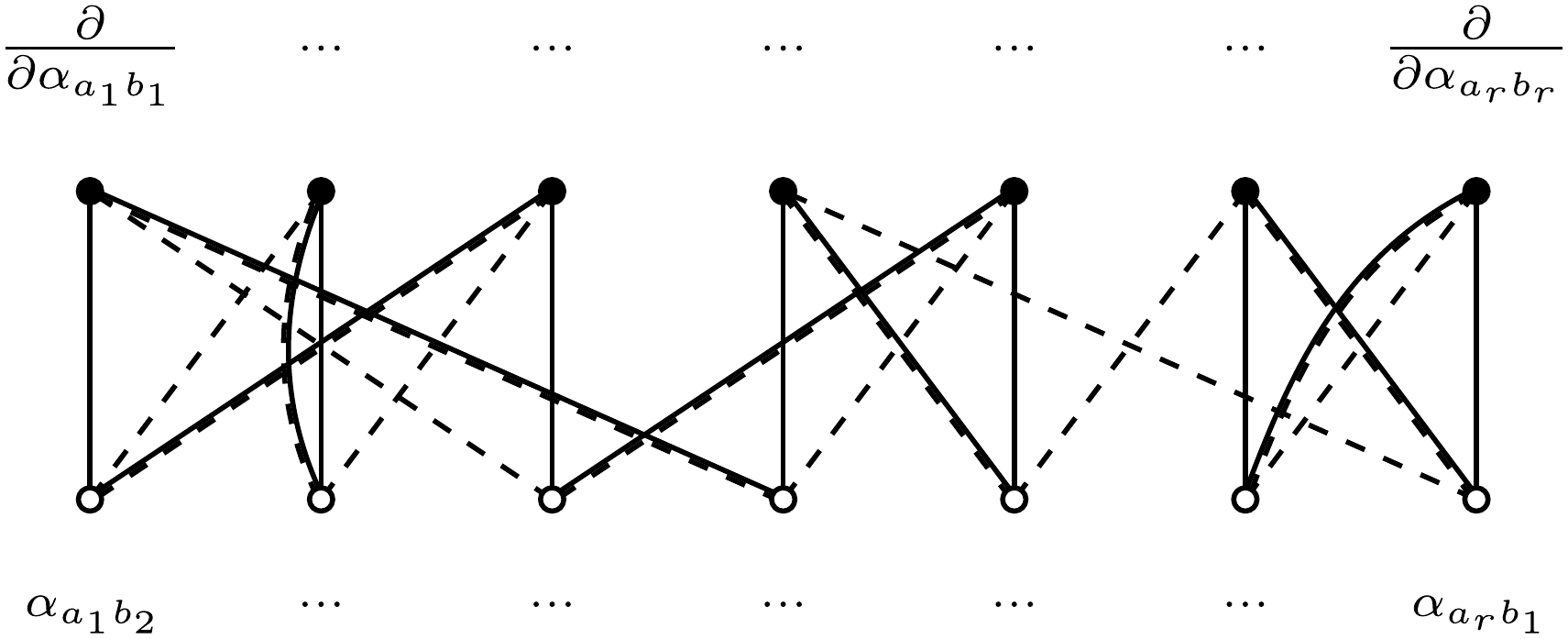}\hfill{}

\protect\caption{\label{fig:two-loop-1}A diagram representing a hypermap with two
face. This diagram shares most properties with figure \ref{fig:contractions},
except that the dashed lines have been rearranged so that there are
two loops of solid and dotted lines (of lengths three and four going
from left to right). This diagram in particular has been chosen because
it is connected; if the permutation associated with the double lines
did not join the two loops together the diagram (and resulting hypergraph)
would be disjoint, meaning it could not represent a hypermap.}
\end{figure}
Figure \ref{fig:contractions} demonstrates a representation of rooted
one-face hypermaps as diagrams with two distinct sets of $r$ vertices
and three distinct sets of $r$ edges connecting them. Two of these
edge types -- the solid and dashed edges -- combine to create a single
closed loop, which corresponds to the single face of the hypermap
it represents.

We assign to a two-face rooted hypermap a similar representation,
but with the solid and dashed edges forming two loops instead of one.
An example of such a diagram is shown in Figure \ref{fig:two-loop-1}.
If we take a diagram of this form and sum over all possible arrangements
of the double lines, then in the same way that diagrams with one loop
of length $r$ corresponds to 
\[
P_{r}(m,n)=\frac{\Gamma(mn+r)}{\Gamma(mn)}\langle\text{Tr}[(\hat{\rho}^{A})^{r}]\rangle,
\]
a diagram with two loops of length $a$ and $b$ corresponds to
\[
P_{a,b}(m,n)=\frac{\Gamma(mn+a+b)}{\Gamma(mn)}\langle\text{Tr}[(\hat{\rho}_{m}^{mn})^{a}]\text{Tr}[(\hat{\rho}_{m}^{mn})^{b}]\rangle.
\]
Proving this requires following the same method used in Theorem \ref{thm:matrix-integral},
and seeing that each trace term in the mean corresponds to a single
loop.

This function is not sufficient to give us the generating function
for two-face rooted hypermaps, however, as it over-counts for two
reasons. First, it counts cases where the diagram is disjoint. We
remove these cases by subtracting
\[
P_{a}(m,n)P_{b}(m,n),
\]
which generates the disjoint cases by treating the two loops as independent
one-loop diagrams placed next to each other.

Secondly, as rooting the hypermaps requires only fixing of one of
the two loops against cyclic permutation, there remains a cyclical
degeneracy in the second loop; if the diagram isn't disjoint then
cycling the second will necessarily produce a distinct diagram, so
each diagram is in an equivalence class of size $b$. We account for
this degeneracy by dividing by $b$.

Finally, to count all such hypermaps with $r$ darts, we sum over
all pairs of integers $(a,b)$ which sum to $r$, giving the generating
function
\begin{equation}
\sum_{e,v}h_{r}^{(2)}(e,v)m^{e}n^{v}=\sum_{b=1}^{r-1}\frac{1}{b}\left(P_{r-b,b}(m,n)-P_{r-b}(m,n)P_{b}(m,n)\right).\label{eq:2-face-gf}
\end{equation}

\end{proof}
\bigskip{}

This generating function can then be evaluated exactly using the same
tools we applied to the one-face case. But without going that far
we can already derive the following result, a direct analogue of Corollary
\ref{cor:all-count}:

\bigskip{}

\begin{cor}
There are
\[
\sum_{b=1}^{r-1}\frac{r!-b!(r-b)!}{b}
\]
rooted hypermaps with two faces and $r$ darts.\end{cor}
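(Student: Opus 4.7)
The plan is to evaluate the generating function established in the preceding theorem at $m=n=1$, since $\sum_{e,v}h_{r}^{(2)}(e,v)m^{e}n^{v}$ reduces at this point to the total count $\sum_{e,v}h_{r}^{(2)}(e,v)$, i.e.\ the number of rooted two-face hypermaps with $r$ darts. So the task is simply to determine $P_{r-b}(1,1)$, $P_{b}(1,1)$ and $P_{r-b,b}(1,1)$ and substitute.

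First, Corollary \ref{cor:all-count} immediately gives $P_{r}(1,1) = r!$, so the product appearing inside the sum becomes $P_{r-b}(1,1)\,P_{b}(1,1) = (r-b)!\,b!$. Next I would compute $P_{a,b}(1,1)$ for $a+b=r$. The argument used to identify $P_r(m,n)$ with a sum over permutations (expanding $P_r$ as in equation (\ref{eq:P_r-enum})) carries over almost verbatim to the two-loop diagram of Figure \ref{fig:two-loop-1}: each $\frac{\partial}{\partial \alpha}$-to-$\alpha$ pairing is indexed by some $\sigma \in Sym_{a+b}$, and each $\sigma$ contributes a monomial of the form $m^{A}n^{B}$ obtained by counting closed loops of solid and dotted lines in the associated diagram. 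Setting $m=n=1$ makes every such monomial equal to $1$, so $P_{a,b}(1,1)$ is just the cardinality of $Sym_{a+b}$, namely $(a+b)! = r!$.

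Substituting these values into the generating function from the preceding theorem gives
\[
\sum_{e,v}h_{r}^{(2)}(e,v) = \sum_{b=1}^{r-1}\frac{1}{b}\bigl(r! - b!(r-b)!\bigr),
\]
which is exactly the asserted count.

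The only non-routine step is the identification $P_{a,b}(1,1) = (a+b)!$, since $P_{a,b}$ is introduced only briefly in the preceding theorem. This is not really a hard step so much as one that requires spelling out: one has to note explicitly that the combinatorial expansion of $P_{a,b}$ in terms of permutations is exactly analogous to the one already performed for $P_r$, with the only change being the shape of the diagram (two loops rather than one). Once that is observed, the rest is a substitution.
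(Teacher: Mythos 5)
Your proposal is correct and follows essentially the same route as the paper: set $m=n=1$ in the two-face generating function and substitute $P_{r}(1,1)=r!$ and $P_{a,b}(1,1)=(a+b)!$. The only (cosmetic) difference is in how those evaluations are justified --- the paper reads them off instantly from the unit-trace property of $\hat{\rho}^{A}$, namely $P_{r}(1,1)=\frac{\Gamma(1+r)}{\Gamma(1)}\langle 1\rangle$ and $P_{a,b}(1,1)=\frac{\Gamma(1+a+b)}{\Gamma(1)}\langle 1\cdot 1\rangle$, whereas you argue combinatorially via the bijection with permutations; both are valid.
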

\begin{proof}
(\ref{eq:2-face-gf}) gives us a generating function for counting
rooted hypermaps with two faces and $r$ darts. In order to sum over
all possible numbers of edges and vertices we simply set $m$ and
$n$ to unity in this expression.

Due to the fact that $\hat{\rho}^{A}$ has unit trace,
\[
P_{r}(1,1)=\frac{\Gamma(1+r)}{\Gamma(1)}\langle1\rangle=r!
\]
and
\[
P_{a,b}(1,1)=(a+b)!,
\]
so
\begin{eqnarray*}
\sum_{e,v}h_{r}^{(2)}(e,v) & = & \sum_{b=1}^{r-1}\frac{P_{r-b}(1,1)-P_{b}(1)P_{r-b}(1)}{b}\\
 & = & \sum_{b=1}^{r-1}\frac{r!-b!(r-b)!}{b}.
\end{eqnarray*}

\end{proof}

\section{Discussion}

We have seen that our method for finding closed-form generating functions
has a quantitative advantage over the direct enumerative method in
terms of the time it takes to compute the actual numbers of hypermaps.
However, this is not the only benefit of the method.

Generating functions are a very powerful tool in enumerative combinatorics,
as they allow numerous operations acting on the objects being counted
to be represented by simple algebraic operations. For instance, the
union of two sets is enumerated using the sum of their generating
functions, and the product of two sets is enumerated by the product
of their generating functions\cite[pp 3-4]{Stanley1997}.

The general methods used in this paper have potentially very wide
applicability. All of the examples covered in this paper are possible
because matrix polynomials integrated over the unit sphere have a
connection to the cycle representation of permutations, as we showed
in Sections \ref{sub:Permutations} and \ref{sub:Hypermaps}. Many
different combinatoric quantities can be expressed in terms of permutations
(the 3-constellation representation of hypermaps provides just one
example), so this method may be usable to construct generating functions
in other cases where objects can be represented using permutations.

The fact that the matrix integrals used to compute the generating
functions for one-face rooted hypermaps have a relation to bipartite
quantum systems is also of interest. That there is a link between
\emph{bipartite} quantum systems and \emph{bipartite} maps (equivalent
to hypermaps) in particular is clearly not a coincidence; The parameters
$m$ and $n$ in the function $P_{r}(m,n)$ are identified with the
dimensions of the two subsystems of the bipartite quantum system,
and are then also linked to the edges and vertices of the hypermap
respectively (equivalently the two colours of vertices in the bipartite
bicoloured map) when $P_{r}$ is used as a generating function, so
there is a clear correspondence between the bipartite natures of the
quantum systems and the hypermaps.

The relation to quantum systems has had two immediate benefits here.
First, the notation using means of functions of the reduced density
matrix provides a useful shorthand for expressing the bulky matrix
integrals and multiderivatives (see (\ref{eq:matrix-integral}), for
example), which is even extendable to multiple faces as seen in Section
\ref{sub:Beyond-one-face}. Second, the use of expressions in terms
of the reduced density matrix led directly to a method of evaluating
the generating functions in closed form.

\section{Acknowledgements}

The work in this paper was supported by an EPSRC research studentship
at the University of York.

We thank Bernard Kay for commenting on an earlier version of this
paper. We also thank Christopher Hughes for for introducing us to
Zeilberger's algorithm.

\bibliographystyle{plain}
\bibliography{Hypermaps1}

\end{document}